\documentclass[a4paper,11pt]{article}

\usepackage{amsfonts}
\usepackage{amssymb}
\usepackage{authblk}
\usepackage{amsmath}
\usepackage{xy}
\xyoption{all}

\newtheorem{remark}{Remark}
\newtheorem{proposition}{Proposition}
\newtheorem{example}{Example}



\def\R{{\mathbb R}}

\addtolength{\voffset}{-2.5cm} \addtolength{\hoffset}{-1.5cm} \addtolength{\textheight}{4.5cm} \addtolength{\textwidth}{3cm}

\newcommand{\smalllineskip}{\baselineskip=15pt}
\newcommand{\keywords}[1]{{\footnotesize\hspace{-0.5cm}{\it Keywords: }#1\par
  \vskip.7\baselineskip}}

\newcommand{\MSC}[1]{{\footnotesize\hspace{-0.5cm}{Mathematics Subject Classification 2010: }#1\par
  \vskip.7\baselineskip}}

\renewenvironment{abstract}[0]{
\rm
        \begin{center}\textbf{Abstract}
        \\ \vspace{8pt}
        \begin{minipage}{5.2in}\smalllineskip
        }
        {\end{minipage}\end{center}\vspace{-1pt}}
\newcommand{\emailaddress}[1]{\newline{\sf#1}}
\let\LaTeXtitle\title
\renewcommand{\title}[1]{\LaTeXtitle{\LARGE{\textbf{#1}}}}

\title{Optimal control of affine connection control systems from the point of view of Lie algebroids \thanks{Preprint of an article published in International Journal of Geometric Methods in Modern Physics, 2014 DOI: 10.1142/S0219887814500388 \copyright copyright World Scientific Publishing Company http://www.worldscientific.com/doi/abs/10.1142/S0219887814500388}}
\date{}

\author[1]{L\'igia Abrunheiro}
\author[2]{Margarida Camarinha}

\affil[1]{CIDMA -- Center for Research and Development in Mathematics and Applications, and Higher Institute of Accounting and Administration, University of Aveiro, 3810-500 Aveiro, Portugal \emailaddress{abrunheiroligia@ua.pt}}
\affil[2]{CMUC -- Centre for Mathematics of the University of Coimbra, Department of Mathematics, University of Coimbra, 3001-501 Coimbra, Portugal \emailaddress{mmlsc@mat.uc.pt}}

\begin{document}

\maketitle

\begin{abstract}
The  purpose of this paper is to use the framework of Lie algebroids to study  optimal control problems for affine connection control systems on  Lie groups. In this context, the equations for critical trajectories of the problem are geometrically characterized as a Hamiltonian vector field.
\end{abstract}

\keywords{Optimal control problems; affine connection control systems; Lie groups; Lie algebroids.}

\MSC{49J15, 70H05, 70G65, 70E60, 22A22, 53C30, 53B21}

\section{Introduction}

The affine connection control systems (ACCS) constitute a special class of holonomic mechanical systems with inputs. Although not completely general, this kind of simple mechanical system, depending on a Riemannian structure, is the model for many applications of current interest in control. In particular, a large number of systems, such as satellites, hovercrafts and underwater vehicles, have kinetic energy and input forces which are invariant under a certain group action and, moreover, their configuration spaces have a Lie group structure. The aim of this note is to analyze optimal control problems on  Lie groups for this class of control systems.

Optimal control problems (OCP) for affine connection control systems on Riemannian manifolds were studied by M. Barbero-Li\~n\'an and M. C. Mu\~noz-Lecanda in \cite{BarMLecan:2008}, where a presymplectic constraint algorithm was used to characterize the normal and, especially, the abnormal critical trajectories of the problem. Optimal control problems have much in common with variational problems with constraints; in particular, OCP for ACCS are related to  second order variational problems with second order constraints. In this context, these problems have been extensively studied by A. Bloch and P. Crouch  in \cite{BlCr:1993} and, more recently, by  L.  Colombo, D. Martin de Diego and M. Zuccalli (see \cite{Col2010, Col2013b}).

This note reflects our point of view, namely that Lie algebroid formalism is useful for studying this class of mechanical control systems due to the invariance of the  system under the action of the group. To be more precise, we are considering, in the framework of Lie algebroids, an optimal control problem for the Poincar\'{e} representation of an affine connection control system. The geometric description of the problem we develop here follows the formulation of Pontryagin's maximum principle proposed in \cite{Martinez:2007}. The geometry provided by Lie algebroids has already been used in \cite{Barbero-Linan2009a}, for the purpose of characterizing abnormal critical trajectories for this kind of optimal control problem.

\section{Optimal Control of Affine Connection Control Systems on Lie Groups}

Let  $G$  be an n-dimensional Lie group endowed with a left-invariant Riemannian metric $\left\langle \, .\, ,\, .\, \right\rangle$ on $G$. Given $m$ linearly independent elements $E_1,\dots,E_m$ of the Lie algebra $\mathfrak{g}$ of $G$, let $\mathcal{D}$  be the distribution on $G$ generated by the corresponding left-invariant vector fields  $\{E_1^L,\dots,E_m^L\}$, $m\leq n$.

\subsection{Affine connection control systems}

An affine connection control system $(G,\left\langle \, .\, ,\, .\, \right\rangle, \mathcal{D})$ is a simple mechanical control system  without external forces or nonholonomic constraints, and with dynamics given by the equation $\frac{D^2x}{dt^2}(t)=u_a(t)E_a^L(x(t)),$ where the configuration space is the Lie group $G$ and $\frac{D^2x}{dt^2}$ denotes the covariant acceleration  of the state curve $x$ in $G$ defined by the Levi-Civita connection associated  with $\left\langle \, .\, ,\, .\, \right\rangle$. The kinetic energy metric is $\left\langle \, .\, ,\, .\, \right\rangle$, the potential function is zero and the input forces are the left-invariant $1$-forms obtained from the input vector fields $E_1^L,\dots,E_m^L$ via the musical isomorphism defined by the metric tensor. The control functions are $u=(u_1,u_2,\ldots, u_m):[0,T]\rightarrow U$, where $U$ is an open subset of $\R^m$. Throughout this note we make the assumption that the control set $U$ is $\R^m$.

ACCS can be classified as  holonomic mechanical systems with inputs, in contrast with the nonholonomic mechanical systems which have  velocity constraints (see \cite{BlCr:1993} for details). In spite of our particular interest in the study of underactuated systems ($m<n$), we are also considering the fully actuated ones ($m=n$). We assume that our control systems are small-time locally controllable for every proper control set. For these issues see, for instance, \cite{BullLewis:2005}.

The introduction of  new coordinates to the affine connection control system, usually called \emph{quasi-velocities}, is essential  for representing the system. This leads to the notion of \emph{body velocity} of  a curve $x$ in $G$, the curve  $y$ in $\mathfrak{g}$ given by the pullback of the velocity vector field of $x$ by left translation.

The Levi-Civita connection associated  with $\left\langle \, .\, ,\, .\, \right\rangle$ is uniquely defined, via left translation, by the  bilinear map $\alpha: \mathfrak{g}\times \mathfrak{g}\to\mathfrak{g},(y,z)\mapsto\frac12 [y,z]-\frac12\mathbb{I}^{\sharp}(ad_{y}^*\mathbb{I}^{\flat}z+ad_{z}^*\mathbb{I}^{\flat}y)$, where $\mathbb{I}$  is the inner product on the Lie algebra $\mathfrak{g}$ of $G$ corresponding to $\left\langle \, .\, ,\, .\, \right\rangle$ and $\mathbb{I}^{\flat}:\mathfrak{g}\to \mathfrak{g}^*$ and $\mathbb{I}^{\sharp}:\mathfrak{g}^*\to \mathfrak{g}$ are the musical isomorphisms defined by $\mathbb{I}$. Therefore, the affine connection control system may be  represented by the controlled Euler-Poincar\'{e} equations
\begin{equation} \label{eqEulerPoincare}
\frac{dx}{dt}(t)=T_eL_{x(t)}y(t) \quad \quad \quad \frac{dy}{dt}(t)-\mathbb{I}^{\sharp}ad_{y(t)}^*\mathbb{I}^{\flat}y(t)=u_a(t)E_a.
\end{equation}

\subsection{Optimal control problem for an affine connection control system}

Consider the tangent bundle of order two $T^2G$ and its trivialization $\widetilde{T^2G}$, which is the subbundle of $T(G\times\mathfrak{g})$ defined by $\{ (v_x, y, u)\in T_{(x,y)}(G\times\mathfrak{g}):  T_{x}L_{x^{-1}}v_x=y\}$. An element of $\widetilde{T^2G}$ may be represented by the pair $(v_x,u)$, $v_x\in T_xG$, $u \in \mathfrak{g}$, $x\in G$.

The optimal control problem for ACCS is the following. Given $x_0,x_T\in G$,  $v_0\in T_{x_0}G$, $v_T\in T_{x_T}G$,   $T\in \mathbb{R}^+$ and  the cost function  $\mathbb{L}:\widetilde{T^2G}\to \mathbb{R}$, the problem consists of  finding the $C^2$ piecewise-smooth curves $(x,u)$ in $G\times \mathcal{D}_e$ that minimize $\int_0^T\mathbb{L}(v_x(t), u(t))\, dt$ with \mbox{$v_x:[0,T]\to TG$} a $C^2$ piecewise-smooth vector field along $x$, subject to the boundary conditions $x(0)=x_0,$ $x(T)=x_T,$ $v_x(0)=v_0,$  $v_x(T)=v_T$ and satisfying  the affine connection control system $(G,\left\langle \, .\, ,\, .\, \right\rangle, \mathcal{D})$
\begin{equation} \label{eqEulerPoincarev2}
\frac{dx}{dt}=v_x \quad \quad \quad
\frac{d}{dt}(T_{x}L_{x^{-1}}v_x)=\mathbb{I}^{\sharp}ad_{T_{x}L_{x^{-1}}v_x}^*\mathbb{I}^{\flat}(T_{x}L_{x^{-1}}v_x)+u.
\end{equation}

\section{Optimal Control on Lie Algebroids}

Throughout this section, we consider a  Lie algebroid structure $(E,[.,.]_E,\rho_E)$ on a vector bundle $\tau_E:E\to M$, with  anchor map $\rho_E:E\to TM$ and Lie bracket $[.,.]_E$ on the $C^{\infty}(M)$-module $\Gamma(E)$ of sections of $E$.  This structure induces a natural linear Poisson structure on the dual bundle $E^*$ of $E$, with respect to which the dual map $\rho_E^*:E^* \to T^*M$ is Poisson (for the Poisson structure on $T^*M$ induced by the canonical symplectic structure). The corresponding exterior differential  $d^E:\Gamma (\Lambda^kE^*)\to \Gamma (\Lambda^{k+1}E^*)$  defines the corresponding algebroid cohomology. This concept is essential for defining a symplectic structure on $E$, a nondegenerate section of $\Lambda^2E^*$ which  is closed with respect to $d^E$. We refer to \cite{Mackenzie} for a detailed description of Lie algebroids.

The notion of the prolongation of a fibration $\nu: P \to M$ with respect to $E$ is fundamental to the study of optimal control problems on Lie algebroids. We consider, for each point $p\in P$ such that $\nu(p)=x$, the vector space \mbox{$\mathcal{T}_{p}^EP =\{(b,v)\in E_x\times T_pP:\rho(b)=T_p\nu(v)\}.$} The set $\mathcal{T}^{E}P=\bigcup_{p\in P} \mathcal{T}_{p}^E P$ endowed with the following  Lie algebroid structure is called \emph{the prolongation of} $P$ \emph{with respect to} $E$. It is a vector bundle over $P$ with projection $\tau^E_P$ given by $\tau^E_P(p,b,v)=p$, $(b,v)\in \mathcal{T}_{p}^EP$, $p\in P$. The anchor is the map \mbox{$\rho_\mathcal{T}: {\mathcal{T}^E P}\to{TP}$} given by  $\rho_\mathcal{T}(p,b,v)=v$ and the Lie bracket is defined in terms of projectable sections by $[\mathcal{X}_1, \mathcal{X}_2](p)=(p, [\sigma_1, \sigma_2](\nu(p)), [V_1, V_2](p))$, with projectable sections  $\mathcal{X}_i$ defined by $\mathcal{X}_i(p)=(p, \sigma_i(\nu(p)),V_i(p))$,  $i=1,2$. In particular, the prolongation $\mathcal{T}^E E^*$ equipped with a symplectic structure is required to provide the geometrical framework for Pontryangin's maximum principle (PMP). The canonical symplectic form on $\mathcal{T}^E E^*$  is the section  $\Omega\in \Gamma(\Lambda^2(\mathcal{T}^E E^*)^*))$  described by $\Omega=-d^\mathcal{T}\Theta$, with $\Theta$ the Liouville section given by $\Theta_p(b,v)=p(b)$, $(b,v)\in \mathcal{T}_p^E E^*$, $p\in E^*$.

Let the space of controls be the fiber bundle $\pi:{B}\to{M}$, with $M$ the configuration manifold, and the control system on $E$, the section $\sigma:B\to E$ along $\pi$. A trajectory of the system $\sigma$ is an integral curve of the vector field $\rho_E(\sigma)$ along $\pi$. The optimal control problem on a Lie algebroid is the following. Given  $x_0,x_T\in M$,   $T\in \mathbb{R}^+$ and the cost function $L\in\mathcal{C}^{\infty}(B)$ the problem consists of  finding the $C^2$ piecewise-smooth curves $\gamma$ in $B$ that minimize $\int_0^TL(\gamma)\, dt$ subject to the boundary conditions $(\pi \circ \gamma)(0)=x_0$ and $(\pi \circ \gamma)(T)=x_T$ and satisfying  the control system $\frac{d}{dt}(\pi \circ \gamma)=\rho_E(\sigma)(\gamma)$.

The geometric formulation of optimal control problems based on PMP is described as follows. First we introduce  the Hamiltonian function $H\in\mathcal{C}^{\infty}(E^*\times_MB)$ defined by \linebreak $H(p,u)=p(\sigma(u)) -L(u)$. The PMP leads to the Hamiltonian control system $\sigma_H$ (a section of $\mathcal{T}^E{E^*}$ along $\mathrm{pr}_1:{E^*\times_MB}\to{E^*}$) defined on a subset of the manifold $E^*\times_M B$, by means of the symplectic equation $i_{\sigma_H}\Omega=d^{\mathcal{T}}H.$ The integral curves of the vector field $\rho_{\mathcal{T}}(\sigma_H)$ are called the \emph{critical trajectories} of the problem.

\begin{remark}
Let $\beta$ be a section of $\mathcal{T}^E{E^*}$ along
$pr_1:E^*\times_MB\to E^*.$
Note that $i_{\beta}\Omega$ is defined by  $(i_{\beta}\Omega)_{a}(z)=\Omega_p(\beta(a),\mathcal{T} pr_1(z)),$ $z\in \mathcal{T}_a^E{(E^*\times_MB)},$    $a=(p,u)\in E^*\times_MB$,
where
\linebreak $\mathcal{T} pr_1:{\mathcal{T}^E{(E^*\times_MB)}}\to{\mathcal{T}^E{E^*}}$ is the prolongation of $\mathrm{pr}_1$ over the identity map on
$M$.
\end{remark}

Consider $W=\{a\in E^*\times_M B: dH(a)(z)=0\mathrm{\,\, for\,\,\, all \,\,\,}z\in\mathrm{Ker}\,\mathcal{T}{\mathrm{pr}_1}\}$, the set where the symplectic equation has a solution. Then, if there are points in $W$ at which the vector field  $\rho(\sigma_H)$ is not tangent to $W$, it is necessary to implement a constraint algorithm to obtain the submanifold where the critical trajectories lie (see \cite{Martinez:2007} and the references mentioned in this paper).

\section{Description of the Optimal Control Problem on Lie Algebroids}

We consider as configuration manifold the trivialization of $TG$, $G\times \mathfrak{g}$. The space of controls is the trivial vector bundle, ${\pi}:G\times \mathfrak{g}\times \mathcal{D}_e  \to G\times \mathfrak{g}$, $(x,y,u)\mapsto (x,y)$ and the cost function  is the map $L:G\times \mathfrak{g}\times \mathcal{D}_e \to \mathbb{R}$, given by $L(x,y,u)=\mathbb{L}(T_eL_xy,u)$, $y \in \mathfrak{g}, u \in D_e, \, x\in G$. The  boundary conditions required to the curve $(x,y)$ in $G\times \mathfrak{g}$ are $x(0)=x_0, \, x(T)=x_T,$ $y(0)=T_{x_0}L_{x_0^{-1}}v_0,  \, y(T)=T_{x_T}L_{x_T^{-1}}v_T$.

\subsection{The Lie algebroid structure}

We consider the trivial vector bundle $\tau:G\times 3\mathfrak{g}\to G\times \mathfrak{g}$ endowed with the direct product  Lie algebroid structure defined by the action Lie algebroid $G\times \mathfrak{g}\to G$ (associated with the infinitesimal right translation action of $\mathfrak{g}$ on $G$) and the canonical Lie algebroid $2\mathfrak{g}\to \mathfrak{g}$. The anchor map $\rho:G\times 3\mathfrak{g}\to T(G\times \mathfrak{g})$ is given by $\rho(x,y;z,w)=(T_{e}L_{x}z,w).$ The bracket operation is uniquely determined by the Leibniz identity and the bracket of constant sections, given by $[c_{(z_1, w_1)},c_{(z_2, w_2)}]=c_{([z_1,z_2]_{\mathfrak{g}},0)},$  where $c_{(z_1, w_1)}$ and $c_{(z_2, w_2)}$ are  constant sections   defined by $\,(z_1, w_1), (z_2, w_2)\in \mathfrak{g}\times \mathfrak{g}$.

Consider a basis $\{E_i\}_{i=1,\dots, n}$ of $\mathfrak{g}$ obtained by completing the basis of $\mathcal{D}_e$ with  vectors $E_{m+1},\ldots, E_n\in\mathcal{D}_e^\bot$. Denoting the local coordinates in $G$ by $(x^i)$, the local coordinates in $G\times \mathfrak{g}$ with respect to the basis of $\mathfrak{g}$ are denoted by $(x^i,y^i)$. The local coordinates in $G\times 3\mathfrak{g}$, corresponding to the  local basis of sections  $e_{i}=(E_{i},0)\, \, \mbox{and} \, \, e_{i+n}=(0,E_{i}), \, i=1,\dots, n$,  are denoted by $(x^i,y^i;z_i,w_i)$. The corresponding local structure functions $C_{ij}^{k}$, $i, j, k=1,\dots, 2n$ are the constants of structure of $\mathfrak{g}$, if $i, j, k=1,\dots, n$, and are zero, otherwise. The Lie algebra homomorphism $\rho: \Gamma(G\times 3\mathfrak{g})\to \mathfrak{X}(G\times \mathfrak{g})$ is represented  by $\rho(e_i)=(E^L_i,0)$ and $\rho(e_{i+n})=(0,E_i)$, $\, i=1,\dots, n.$

Let  $G\times \mathfrak{g}\times 2\mathfrak{g}^*$ be the dual bundle of $G\times 3\mathfrak{g}$. We denote the local coordinates of an element $p=(x,y;\mu,\xi)$ of $G\times \mathfrak{g}\times 2\mathfrak{g}^*$  by $(x^i,y^i;\mu_{i},\xi_{i})$, corresponding to the local dual basis of sections $e^{i}=(E_{i}^*,0) \, \, \mbox{and} \, \, e^{i+n}=(0,E_{i}^*), \, i=1,\dots, n,$ where $\{E_i^*\}_{i=1,\dots, n}$ is the dual basis of $\{E_i\}_{i=1,\dots, n}$. The dual bundle is endowed with the natural linear Poisson structure $\{\, . \, , \, . \, \}$ given by
\begin{eqnarray}
\nonumber\{f,g\}(p)&=& \frac{\delta g}{\delta x}(p)\left(T_eL_x\frac{\delta f}{\delta \mu}(p)\right)-\frac{\delta f}{\delta x}(p)\left(T_eL_x\frac{\delta g}{\delta \mu}(p)\right) + \frac{\delta g}{\delta y}(p)\left(\frac{\delta f}{\delta \xi}(p)\right)\\ &&-\frac{\delta f}{\delta y}(p)\left(\frac{\delta g}{\delta \xi}(p)\right)+\mu\left([\frac{\delta f}{\delta \mu}(p), \frac{\delta g}{\delta \mu}(p)]\right),
\end{eqnarray}
for each $f,g\in C^{\infty} (G\times \mathfrak{g}\times 2\mathfrak{g}^*)$. Here, $\frac{\delta f}{\delta y}(p)\in \mathfrak{g}^*$  denotes the partial functional derivative defined by $\frac{\delta f}{\delta y}(p)(z)=\frac{\partial f}{\partial y^i}(p)z_i$; $\frac{\delta f}{\delta \mu}(p)\in \mathfrak{g}$ is  the partial functional derivative given by $\frac{\delta f}{\delta \mu}(p)(\xi)=\frac{\partial f}{\partial \mu_i}(p)\xi_i$; and $\frac{\delta f}{\delta x}(p)$ is $ \frac{\partial f}{\partial x^i}(p)dx^i\in T_x^*G$.  Relative to this Poisson bracket and the symplectic bracket on $T^*(G\times \mathfrak{g})$, the dual map $\rho^*:G\times \mathfrak{g}\times 2\mathfrak{g}^* \to T^*(G\times \mathfrak{g})$ given by  $\rho^*(x,y;\mu,\xi)=(x,y;T_x^*L_{x^{-1}}\mu,\xi)$ is Poisson.

The prolongation  of $G\times \mathfrak{g}\times 2\mathfrak{g}^*$ with respect to $G\times 3\mathfrak{g}$ is  the Lie algebroid over $G\times\mathfrak{g}\times 2\mathfrak{g}^*$ identified with $\mathcal{T}=(G\times \mathfrak{g}\times 2\mathfrak{g}^*)\times  (2\mathfrak{g}\times 2\mathfrak{g}^*)$, with anchor $\rho_{\mathcal{T}}$ given by \linebreak $\rho_{\mathcal{T}}(x,y;\mu,\xi;z,w;v_{\mu},v_{\xi})=(T_eL_xz,w;v_{\mu},v_{\xi})$, local basis of sections, $\mathcal{X}_{i}\!=\!(E_{i},0,0,0),$ \linebreak $\mathcal{X}_{i+n}\!=\!(0,E_{i},0,0),$ $\mathcal{V}_i\!=\!(0,0, E_i^*,0),$ $\mathcal{V}_{i+n}\!=\!(0,0,0,E_i^*),$ $i\!=\!1,\dots,n$, and bracket operation given by $[\mathcal{X}_{i},\mathcal{X}_{j}]=C^{k}_{ij}\mathcal{X}_{k}, [\mathcal{X}_{i},\mathcal{V}_j]=0$ and $[\mathcal{V}_{i},\mathcal{V}_j]=0,$ $i,j=1,\ldots, 2n.$ The canonical symplectic  form associated with $G\times 3\mathfrak{g}$ is given,   for each $(x,y;\mu,\xi;z,w;v_{\mu},v_{\xi}),(x,y;\mu,\xi;z^{\prime},w^{\prime};v_{\mu}^{\prime},v_{\xi}^{\prime})\in \mathcal{T}$, by
\begin{eqnarray}
\nonumber \Omega_{(x,y;\mu,\xi)}\left((x,y;\mu,\xi;z,w;v_{\mu},v_{\xi}),(x,y;\mu,\xi;z^{\prime},w^{\prime};v_{\mu}^{\prime},v_{\xi}^{\prime})\right) \\[8pt]
=  v_{\mu}^{\prime}(z) +v_{\xi}^{\prime}(w)-(v_{\mu}(z^{\prime})+v_{\xi}(w^{\prime}))+\mu([z,z^{\prime}]).
 \end{eqnarray}

\begin{remark} \label{rem2}
The corresponding  symplectic form on $G\times \mathfrak{g}\times 2\mathfrak{g}^*$ is defined by
$$\begin{array}{l}\overline{\Omega}_{(x,y;\mu,\xi)}((z_x,w;v_{\mu},v_{\xi}),(z_x^{\prime},w^{\prime};v_{\mu}^{\prime},v_{\xi}^{\prime}))= v_{\mu}^{\prime}(T_xL_{x^{-1}}z_x)+v_{\xi}^{\prime}(w) \\[8pt]
\qquad \qquad  -\left(v_{\mu}(T_xL_{x^{-1}}z_x^{\prime})+v_{\xi}(w^{\prime})\right)+\mu([T_xL_{x^{-1}}z_x,T_xL_{x^{-1}}z_x^{\prime}]).
\end{array}$$
\end{remark}

We also consider the vector  bundle $\pi: G\times \mathfrak{g}\times 2\mathfrak{g}^*\times \mathcal{D}_e\to G\times \mathfrak{g}.$ The prolongation  of $G\times \mathfrak{g}\times 2\mathfrak{g}^*\times \mathcal{D}_e$ with respect to $G\times 3\mathfrak{g}$ is the Lie algebroid identified with $\widetilde{\mathcal{T}}=(G\times \mathfrak{g}\times 2\mathfrak{g}^*\times \mathcal{D}_e)\times  (2\mathfrak{g}\times 2\mathfrak{g}^*\times \mathcal{D}_e),$ with anchor $\rho_{\widetilde{\mathcal{T}}}$ given by $\rho_{\widetilde{\mathcal{T}}}(x,y;\mu,\xi,u;z,w;v_{\mu},v_{\xi},v_u)=(T_eL_xz,w;v_{\mu},v_{\xi}, v_u)$, local basis of sections $\mathcal{X}_{i}\!=\!(E_{i},0,0,0,0)$, $\mathcal{X}_{i+n}\!=\!(0,E_{i},0,0,0)$, $\mathcal{V}_i\!=\!(0,0, E_i^*,0,0)$, $\mathcal{V}_{i+n}\!=\!(0,0,0,E_i^*,0)$, $\mathcal{P}_a\!=\!(0,0,0,0, E_a)$, $i\!=\!1,\dots,n$, $a\!=\!1,\dots,m$ and  bracket operation defined by $[\mathcal{X}_{i},\mathcal{X}_{j}]=C^{k}_{ij}\mathcal{X}_{k}$, $[\mathcal{X}_{i},\mathcal{V}_j]=0,$ $[\mathcal{V}_{i},\mathcal{V}_j]=0,$ $ [\mathcal{P}_{a},\mathcal{X}_{j}]=0,$ $[\mathcal{P}_{a},\mathcal{V}_{j}]=0,$ $[\mathcal{P}_{a},\mathcal{P}_{b}]=0,$ $i,j=1,\dots, 2n, a,b=1,\dots, m$.

\subsection{The Hamiltonian control system}

The control system ${\sigma}$ is the section of $G\times 3\mathfrak{g}$ along ${\pi}:G\times \mathfrak{g}\times \mathcal{D}_e  \to G\times \mathfrak{g}$, defined by ${\sigma}(x,y,u)=(x,y,y,u+\mathbb{I}^{\sharp}ad_{y}^*\mathbb{I}^{\flat}y).$ Note that $\rho(\sigma)$  is exactly the Poincar\'{e} representation (\ref{eqEulerPoincare}) of the affine connection control system $(G, \left\langle \, .\, ,\, .\, \right\rangle, \mathcal{D})$. The  corresponding Hamiltonian function $H:G\times \mathfrak{g}\times 2\mathfrak{g}^*\times \mathcal{D}_e\to \mathbb{R}$ is defined by
\begin{equation}\label{ham}
\displaystyle H(x,y;\mu,\xi,u)=\mu\left(y\right)+\xi\left(u+\mathbb{I}^{\sharp}ad_{y}^*\mathbb{I}^{\flat}y\right)- L(x,y,u).
\end{equation}

\begin{proposition}\label{Prop1}
The critical trajectories for the OCP satisfy the  equations
                                                               \begin{align}
                                                \nonumber  \frac{dx}{dt}= T_eL_xy &\quad &            \frac{dy}{dt}=u+\mathbb{I}^{\sharp}ad_{y}^*\mathbb{I}^{\flat}y\\
   \nonumber  \frac{d\mu}{dt}=T_e^*L_x\frac{\delta L}{\delta x}+ad^*_y\mu &\quad &
\frac{d\xi}{dt}=-\mu+\frac{\delta L}{\delta y}-\mathbb{I}^{\flat}ad_{y}\mathbb{I}^{\sharp}\xi+ad_{\mathbb{I}^{\sharp}\xi}^*\mathbb{I}^{\flat}y\\
0=\xi^{\mathcal{D}_e}-\frac{\delta L}{\delta u}.&\quad &
                                                               \end{align}
Here $\xi^{\mathcal{D}_e}$ denotes $\xi_aE_a^*\in \mathbb{I}^{\flat}\mathcal{D}_e$, for $\xi=\xi_iE_i^*\in \mathfrak{g}^*$.
\end{proposition}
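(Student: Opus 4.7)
The plan is to instantiate the Pontryagin recipe of Section 3 with the Lie algebroid, prolongation, symplectic form $\Omega$, and Hamiltonian $H$ of Section 4. Writing the unknown Hamiltonian section $\sigma_H$ along $\mathrm{pr}_1$ in the local basis of $\widetilde{\mathcal{T}}$ as
\begin{equation*}
\sigma_H=A^i\mathcal{X}_i+B^i\mathcal{X}_{i+n}+C_i\mathcal{V}_i+D_i\mathcal{V}_{i+n}+F^a\mathcal{P}_a,
\end{equation*}
I would determine the coefficient blocks $A,B,C,D,F$ by evaluating the symplectic equation $i_{\sigma_H}\Omega=d^{\mathcal{T}}H$ on the dual basis of $\widetilde{\mathcal{T}}^*$, apply the anchor $\rho_{\widetilde{\mathcal{T}}}$ to read off the system of ODEs satisfied by the critical trajectories, and use the $W$-condition $dH|_{\mathrm{Ker}\,\mathcal{T}\mathrm{pr}_1}=0$ to extract the algebraic constraint.

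For the left-hand side, inserting $\sigma_H$ and each basis section into the explicit formula for $\Omega$ from Section 4.1 (which, being pulled back along $\mathrm{pr}_1$, ignores the $v_u$ component) gives
\begin{equation*}
i_{\sigma_H}\Omega(\mathcal{X}_j)=-C_j+\mu([A,E_j]),\qquad i_{\sigma_H}\Omega(\mathcal{X}_{j+n})=-D_j,
\end{equation*}
\begin{equation*}
i_{\sigma_H}\Omega(\mathcal{V}_j)=A^j,\qquad i_{\sigma_H}\Omega(\mathcal{V}_{j+n})=B^j,\qquad i_{\sigma_H}\Omega(\mathcal{P}_a)=0,
\end{equation*}
with $A=A^iE_i$. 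For the right-hand side, the identity $d^{\mathcal{T}}H(\mathcal{S})=\rho_{\widetilde{\mathcal{T}}}(\mathcal{S})[H]$ shows that the five generators produce, respectively, $-E_j^L[L]$, $\partial H/\partial y^j$, $\partial H/\partial\mu_j$, $\partial H/\partial\xi_j$ and $\partial H/\partial u_a$, computed from (\ref{ham}).

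Matching the two sides and then applying $\rho_{\widetilde{\mathcal{T}}}$ to $\sigma_H$ yields the system in blocks. The $\mathcal{V}$ equations force $A=y$ and $B=u+\mathbb{I}^{\sharp}ad_y^*\mathbb{I}^{\flat}y$, reproducing the first line of the proposition. The $\mathcal{X}_j$ equation, together with $E_j^L[L]=(T_e^*L_x\frac{\delta L}{\delta x})(E_j)$, gives $C=ad_y^*\mu+T_e^*L_x\frac{\delta L}{\delta x}$, i.e.\ the $\dot\mu$ equation. The $\mathcal{P}_a$ equation is equivalent to the $W$-condition $dH|_{\mathrm{Ker}\,\mathcal{T}\mathrm{pr}_1}=0$, since $\mathrm{Ker}\,\mathcal{T}\mathrm{pr}_1$ is spanned by the $\mathcal{P}_a$, and reads $\xi^{\mathcal{D}_e}=\frac{\delta L}{\delta u}$. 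The coefficient $F^a$ is left undetermined by the symplectic equation itself.

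The main obstacle is the $\mathcal{X}_{j+n}$ component, which produces the $\dot\xi$ equation. Its right-hand side $\partial H/\partial y^j$ requires differentiating the quadratic term $\xi(\mathbb{I}^{\sharp}ad_y^*\mathbb{I}^{\flat}y)$,
\begin{equation*}
\frac{\partial}{\partial y^j}\xi(\mathbb{I}^{\sharp}ad_y^*\mathbb{I}^{\flat}y)=\xi(\mathbb{I}^{\sharp}ad_{E_j}^*\mathbb{I}^{\flat}y)+\xi(\mathbb{I}^{\sharp}ad_y^*\mathbb{I}^{\flat}E_j),
\end{equation*}
and recasting the result in the basis-free form of the proposition. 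Using the symmetry identity $\xi(\mathbb{I}^{\sharp}\eta)=\eta(\mathbb{I}^{\sharp}\xi)$ together with the definition of the coadjoint action, the first summand simplifies to $-(ad_{\mathbb{I}^{\sharp}\xi}^*\mathbb{I}^{\flat}y)(E_j)$ and the second to $(\mathbb{I}^{\flat}ad_y\mathbb{I}^{\sharp}\xi)(E_j)$. Combining these with the $\mu_j$ and $-\frac{\delta L}{\delta y}$ contributions and solving $-D_j=\partial H/\partial y^j$ yields $D=-\mu+\frac{\delta L}{\delta y}-\mathbb{I}^{\flat}ad_y\mathbb{I}^{\sharp}\xi+ad_{\mathbb{I}^{\sharp}\xi}^*\mathbb{I}^{\flat}y$, completing the system.
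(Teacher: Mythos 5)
Your proposal is correct and follows essentially the same route as the paper: solve the symplectic equation $i_{\sigma_H}\Omega=d^{\widetilde{\mathcal{T}}}H$ componentwise (the paper does this via a general section $\beta=(z,w;v_\mu,v_\xi)$ rather than a basis expansion, but the content is identical), identify $\sigma_H$, apply the anchor, and read off the constraint from the $u$-derivative. Your explicit differentiation of $\xi(\mathbb{I}^{\sharp}ad_y^*\mathbb{I}^{\flat}y)$ and its basis-free rewriting correctly supplies the ``straightforward calculations'' that the paper leaves to the reader.
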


\begin{proof}
Given a section $\beta=\left( z,w;v_{\mu},v_{\xi}\right)$ of  {\small$(G\times \mathfrak{g}\times 2\mathfrak{g}^*)\times  (2\mathfrak{g}\times 2\mathfrak{g}^*)$} the section $ i_{\beta}\Omega$  of $(G\times \mathfrak{g}\times 2\mathfrak{g}^*)\times (2\mathfrak{g}^*\times 2\mathfrak{g})$ is defined by
$$(i_{\beta}\Omega)(p)=\left(-v_{\mu}(p)+ad_{z(p)}^*\mu,-v_{\xi}(p);z(p),w(p)\right), \, p=(x,y;\mu,\xi) \in G\times \mathfrak{g}\times 2\mathfrak{g}^*.$$
On the other hand,
$d^{\widetilde{\mathcal{T}}}H(a)=\left(T_e^*L_x\frac{\delta H}{\delta x}(a),\frac{\delta H}{\delta y}(a); \frac{\delta H}{\delta \mu}(a),\frac{\delta H}{\delta \xi}(a),\frac{\delta H}{\delta u}(a)\right)$, for each \linebreak $a=(x,y;\mu,\xi,u)$. Hence, the Hamiltonian control system  $\sigma_H$, the solution of the symplectic equation $i_{\sigma_H}\Omega=d^{\widetilde{\mathcal{T}}}H$  on  $W=\left\{a\in G\times \mathfrak{g}\times 2\mathfrak{g}^*\times \mathcal{D}_e: \frac{\delta H}{\delta u}(a)=0\right\}$, is given by $\sigma_H(a)=\left(\frac{\delta H}{\delta \mu}(a),\frac{\delta H}{\delta \xi}(a), -T_e^*L_x\frac{\delta H}{\delta x}(a)+ad^*_{\frac{\delta H}{\delta \mu}(a)}\mu,-\frac{\delta H}{\delta y}(a)\right).$ The critical trajectories are  the  integral curves of the vector field $\rho_{\mathcal{T}}(\sigma_H)$ defined by
$$\rho_{\mathcal{T}}(\sigma_H)(a)=\left(T_eL_x\frac{\delta H}{\delta \mu}(a),\frac{\delta H}{\delta \xi}(a),
-T_e^*L_x\frac{\delta H}{\delta x}(a)+ad^*_{\frac{\delta H}{\delta \mu}(a)}\mu,-\frac{\delta H}{\delta y}(a)\right), \,a\in W.$$
\noindent The result follows by straightforward calculations.
\end{proof}

\begin{remark}
If the regularity condition is satisfied, that is, the matrix $\left[\frac{\partial^2 L}{\partial u_a\partial u_b}\right]$ is non-singular, then the equations given by Proposition \ref{Prop1} are Hamiltonian on $G\times\mathfrak{g}\times 2\mathfrak{g}^*$ (Remark \ref{rem2}). In particular, when the system is fully actuated, we recover the second order Euler-Arnold equations for the corresponding Hamiltonian function (see \cite{ColomboMDiego:2011}).
\end{remark}

\section{Minimum Covariant Acceleration Control Problem}

A natural optimal control problem for the affine connection control system is the minimum covariant acceleration control problem with  cost function given by $L(x,y,u)=\frac12\mathbb{I}( u, u)$, $(x,y,u)\in G\times\mathfrak{g}\times\mathcal{D}_e.$ From Proposition \ref{Prop1}, since the regularity condition is guaranteed,  we have the following  Hamiltonian equations (relative to the symplectic structure given in Remark \ref{rem2}),
\begin{align}
\nonumber \frac{dx}{dt}= T_eL_xy &\quad & \frac{dy}{dt}=\mathbb{I}^{\sharp}\xi^{\mathcal{D}_e}+\mathbb{I}^{\sharp}ad_{y}^*\mathbb{I}^{\flat}y\\
\frac{d\mu}{dt}=ad_{y}^*\mu&\quad &
 \frac{d\xi}{dt}=-\mu-\mathbb{I}^{\flat}ad_{y}\mathbb{I}^{\sharp}\xi+ad_{\mathbb{I}^{\sharp}\xi}^*\mathbb{I}^{\flat}y.
\end{align}
When the ACCP is fully actuated, the critical curves give rise to the Riemannian cubic polynomials on a Lie group (see \cite{CrouchSilvaLeite1995,Noa1989}). The study of these curves has been motivated by many applications, namely problems of interpolation in computer graphics, robotics, aeronautics and computational anatomy. Due to that, several approaches to the  geometric description of cubic polynomials have been proposed in the last decade (see, for instance, \cite{AbrCamCGal:2011, CrouchSilvaLeiteCam2000, GayHolmMeirRatiuVialard:2011}).

\begin{example}
A quite well-known example is the following. Let us consider the Lie group \linebreak $G=SO(3)$. The Lie algebra isomorphism identifying  $(so(3), [\,.\,,\,.\,])$ with $(\mathbb{R}^3, \times)$ is denoted by $S$. The left-invariant Riemannian metric on $SO(3)$ is induced by the inner product given by $\mathbb{I}(y,z)=(\mathbb{J}y).z$, $y, z \in \mathbb{R}^3$, where  $\mathbb{J}=\mbox{diag}(\mathbb{J}_1,\mathbb{J}_2,\mathbb{J}_3)$. Given the canonical basis $\{e_1, e_2, e_3\}$ of $\mathbb{R}^3$, let $\mathcal{D}$   be the distribution induced by $\{e_1, e_2\}$. The cost function is defined by \linebreak $L(x,y,u)=\frac12\mathbb{J}_1(u_1)^2+\frac12\mathbb{J}_2(u_2)^2$ and  the control system  is $\dot{x}=xSy \, \wedge \, \mathbb{J}\dot{y}=(\mathbb{J}y)\times y+u_1e_1+u_2e_2,$ $(x,y,u)\in SO(3)\times\mathbb{R}^5$,  $u=(u_1,u_2).$ Note that the pre-control system is properly small-time locally controllable from each $x \in SO(3)$ if $\mathbb{J}_1\neq \mathbb{J}_2$. When $\mathbb{J}_1=\mathbb{J}_2$, we can only guarantee that the system is locally configuration controllable (see \cite{BullLewis:2005}, for details). According to Proposition \ref{Prop1}, the Hamiltonian equations on $(SO(3)\times\mathbb{R}^9, \Omega)$ reduce to
\begin{align}
\nonumber \frac{dx}{dt}=xSy  & & \frac{dy}{dt}=\mathbb{J}^{-1}\xi^{\mathcal{D}_e} +  \mathbb{J}^{-1}(\mathbb{J}y)\times y\\
\frac{d\mu}{dt}=\mu\times y& &
\frac{d\xi}{dt}=-\mu+  \mathbb{J}(\mathbb{J}^{-1}\xi)\times y+(\mathbb{J} y)\times (\mathbb{J}^{-1}\xi),
\end{align}
where the symplectic form $\Omega$ is given by
{\small $$\begin{array}{lll}
 \Omega_{(x,y;\mu,\xi)}\left((z_x,w;v_{\mu},v_{\xi}),(z_x^{\prime},w^{\prime};v_{\mu}^{\prime},v_{\xi}^{\prime})\right)&=&v_{\mu}^{\prime}.(S^{-1}x^{-1}z_x)+v_{\xi}^{\prime}.w-  v_{\mu}.(S^{-1}x^{-1}z_x^{\prime})\\
 &&-v_{\xi}.w^{\prime}+\mu.\left((S^{-1}x^{-1}z_x)\times (S^{-1}x^{-1}z_x^{\prime})\right),
  \end{array}
$$}
for each $z_x, z_x^{\prime}\in T_xSO(3)$, $w, w^{\prime}, v_{\mu}, v_{\mu}^{\prime},v_{\xi}, v_{\xi}^{\prime}\in \R^3$, $x\in SO(3)$  and $y,\mu,\xi\in\mathbb{R}^3$.
\end{example}

\section*{Acknowledgments}

{The research  of L. Abrunheiro was supported by Portuguese funds through the CIDMA -- Center for Research and Development in Mathematics and Applications, and the Portuguese Foundation for Science and Technology (``FCT--Funda\c{c}\~ao para a Ci\^encia e a Tecnologia''), within project PEst-OE/MAT/UI4106/2014. The work of M. Camarinha  was partially supported by the ``Centro de Matem\'atica da Universidade de Coimbra'', funded by the European Regional Development Fund through the program COMPETE and by the Portuguese Government through the FCT -- ``Funda\c{c}\~ao para a Ci\^encia e a Tecnologia'' under the project PEst-C/MAT/UI0324/2013.}

\end{document}